\newtheorem{theorem}{Theorem}
\newtheorem{conjecture}[theorem]{Conjecture}
\begin{document}
\title{Scott's induced subdivision conjecture for maximal triangle-free graphs}

    \author{Nicolas Bousquet$^1$, St\'ephan Thomass\'e$^2$ \\
           \small $^1$ Universit\'e Montpellier 2 - CNRS, LIRMM, \\
               \small  161 rue Ada, 34392 Montpellier, France \\
                \small bousquet@lirmm.fr \\
         \small   $^2$ Laboratoire LIP (U. Lyon, CNRS, ENS Lyon, INRIA, UCBL), \\
               \small  46 Allée d’Italie, 69364 Lyon Cedex 07, France. \\
               \small  stephan.thomasse@ens-lyon.fr}

\maketitle

\begin{abstract}
Scott conjectured in \cite{Scott97} that the class of graphs with no induced subdivision of a given graph is $\chi$-bounded. We verify his conjecture for maximal triangle-free graphs.
\end{abstract}

Let $F$ be a graph. We denote by Forb$^*$($F$) the class of graphs with no 
induced subdivision of $F$. A class $\mathcal G$
of graphs is {\it $\chi$-bounded} if there exists a function $f$ such that every graph $G$ of 
$\mathcal G$ satisfies $\chi(G)\leq f(\omega (G))$, where $\chi$ and $\omega$ respectively denote 
the chromatic number and the clique number of $G$. Gy\'arf\'as conjectured that Forb$^*$($F$) is 
$\chi$-bounded if $F$ is a cycle \cite{gyarfas87}. Scott proved that 
for each tree $T$, Forb$^*$($T$) is $\chi$-bounded and conjectured the following \cite{Scott97}.

\begin{conjecture}\label{scott}
For every graph $F$, Forb$^*$($F$) is $\chi$-bounded.
\end{conjecture}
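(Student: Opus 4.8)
The plan is to reduce the general conjecture to the case of complete graphs and then to force an induced subdivision of $K_n$ inside any graph whose chromatic number is large relative to its clique number. First I would argue that it suffices to treat $F = K_n$. Every graph $F$ on $n$ vertices embeds in $K_n$, and from a \emph{proper} induced subdivision of $K_n$ (one in which every edge is subdivided at least once) one recovers an induced subdivision of $F$ by deleting the interiors of the paths corresponding to the non-edges of $F$: since the ambient subgraph was induced and we only delete vertices, what remains is induced, the branch vertices stay pairwise non-adjacent exactly where a non-edge of $F$ was removed, and the surviving paths realise precisely the edges of $F$. So the target becomes: for each $n$ there is a function $f_n$ with $\chi(G) \le f_n(\omega(G))$ for every $G \in \mathrm{Forb}^*(K_n)$; equivalently, a large ratio $\chi/\omega$ must force a proper induced subdivision of $K_n$.

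The engine for manufacturing induced structure out of large chromatic number is BFS-layering. For a connected graph $G$ with distance classes $L_0, L_1, \dots$ from a root, the colouring $v \mapsto (\operatorname{dist}(v) \bmod 2,\ c(v))$, where $c$ properly colours each class, is a proper colouring of $G$ (edges run within a class or between consecutive classes), so $\chi(G) \le 2 \max_i \chi(G[L_i])$. Hence restricting to a single layer costs only a factor $2$ in chromatic number while placing all surviving vertices at a common distance from the root, which is what lets one control how newly built paths attach to the existing configuration. Iterating this, and invoking Scott's theorem that $\mathrm{Forb}^*(T)$ is $\chi$-bounded for every tree $T$ to extract long induced paths, I would produce a nested family of vertex sets, each of large chromatic number, with prescribed mutual distances.

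With this machinery the subdivision of $K_n$ is built greedily. One maintains a partial configuration of some branch vertices together with a reservoir $R$ of still-large chromatic number attached to the configuration in a controlled way. To add a branch vertex and link it to the ones already chosen, locate inside $R$ a high-$\chi$ subregion from which an induced path of length at least $2$ can be routed to a target branch vertex using the layer structure, then recurse on a smaller reservoir whose chromatic number has dropped by only a bounded factor. Because only $\binom{n}{2}$ paths are needed and each step divides $\chi$ by a constant depending on $n$, choosing $f_n$ large enough at the outset guarantees enough chromatic number survives to the end; and since every routed path has length at least $2$, the resulting $K_n$-subdivision is automatically proper, closing the reduction of the first paragraph.

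The hard part, and the reason the conjecture is so resistant in full generality, is ensuring the final object is genuinely \emph{induced}: the connecting paths must be pairwise internally non-adjacent and non-adjacent to foreign branch vertices, yet large chromatic number alone does not prevent a freshly routed path from seeing vertices of earlier paths. Taming these cross-adjacencies appears to demand a Ramsey-type cleaning step that either purges such edges or absorbs them into the combinatorial type being tracked, uniformly over all configurations — and carrying this through simultaneously for every $F$ is exactly where I expect the argument to stall. It is this obstacle that makes it natural to first establish the conjecture within a rigid structured class, where the allowed cross-adjacencies are so constrained that the cleaning becomes tractable, before attempting the statement in its stated full generality.
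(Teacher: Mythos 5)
There is a genuine gap, and you have in fact named it yourself: your final paragraph concedes that the step forcing the connecting paths to be pairwise non-adjacent ``stalls,'' and that step is not a technicality — it is the entire content of the conjecture. Large chromatic number in a reservoir gives no control over adjacencies between a freshly routed path and previously built paths; Scott's theorem for trees produces an induced subdivision of a tree \emph{somewhere} in a high-$\chi$ graph, not an induced path between two prescribed vertices avoiding a prescribed set, so the greedy routing loop has no engine. Worse, the reduction in your first paragraph aims at a statement that is now known to be false: triangle-free intersection graphs of segments with arbitrarily large chromatic number were constructed by Pawlik, Kozik, Krawczyk, Laso\'n, Micek, Trotter and Walczak, refuting the Erd\H{o}s segment statement that the paper notes is implied by the conjecture, and Chalopin, Esperet, Li and Ossona de Mendez verified that these (Burling-type) graphs contain no induced subdivision of $K_5$. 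Hence Forb$^*$($K_5$) is \emph{not} $\chi$-bounded, and no cleaning lemma, however clever, can complete your scheme ``prove it for $K_n$ and delete non-edges'' for $n\geq 5$; the reduction to complete graphs is precisely the wrong move, since the conjecture can only survive for restricted $F$ or restricted classes of graphs $G$.

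For comparison, note that the paper does not prove this statement either — it is stated as a conjecture, and the theorem actually proved concerns only \emph{maximal} triangle-free graphs, where the diameter-two structure dissolves the routing problem entirely. There, every pair of non-adjacent vertices has a common neighbor, so all connecting paths can be taken of length exactly two: the Ding--Seymour--Winkler theorem on hypergraphs with packing number one extracts branch vertices $x_1,\dots,x_d$ together with \emph{private} common neighbors $y_{i,j}$ (adjacent to $x_i,x_j$ and to no other $x_k$), and two applications of Kim's Ramsey-type bound make $X$ and enough of $Y$ stable, so induced-ness is automatic rather than maintained dynamically; Bollob\'as--Thomason then finds the $K_l$-subdivision inside the auxiliary graph. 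If you want a tractable project, that is the template: choose a class rigid enough that the cross-adjacencies you worry about are structurally forbidden, instead of trying to purge them by Ramsey arguments in an arbitrary graph.
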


This question is open for triangle-free graphs, which is probably the core of the problem. 
It also has nice corollaries, for instance it would imply that any collection of 
segments in the plane with no three of them pairwise intersecting can be 
partitioned into a bounded number of non intersecting sets of segment. 
This is a well-known question of Erd\H{o}s, first cited in \cite{gyarfas87}.
Our goal is to prove Scott's 
conjecture for triangle-free graphs with diameter two, i.e. maximal triangle-free graphs.

\begin{theorem}\label{2scott}
Let $F$ be a graph of size $l$. Every maximal triangle-free graph $G$ with 
$\chi(G) \geq {\mathrm e}^{\theta(l^4)}$ contains an induced subdivision of $F$.
\end{theorem}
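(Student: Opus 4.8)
The plan is to build the subdivision ``edge by edge.'' I will subdivide every edge of $F$ at least once, so that it suffices to produce an independent set of branch vertices $b_1,\dots,b_p$ (one per vertex of $F$, where $p\le l$) together with internally disjoint induced paths $P_{ij}$ joining $b_i$ to $b_j$ for each edge $ij$ of $F$, in such a way that no edge of $G$ runs between two of these paths except at a shared branch endpoint. Throughout I will exploit the two defining features of a maximal triangle-free graph $G$: it is triangle-free, so every neighborhood $N(v)$ is independent; and it has diameter at most two, so any two non-adjacent vertices --- in particular any two members of an independent set --- have a common neighbor. This second property is what makes connections cheap, since every required link can \emph{a priori} be realized by a path of length two through a common neighbor.

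First I would concentrate the chromatic number. Passing to a vertex-critical induced subgraph $H$ with $\chi(H)=\chi(G)$ forces minimum degree at least $\chi(G)-1$, so every vertex of $H$ has a huge independent neighborhood; these neighborhoods will serve as reservoirs of private connectors. I would then use the triangle-free Ramsey / $\chi$-bounding machinery (of the kind Scott used for trees) to extract, inside a high-chromatic region, a large independent set of candidate branch vertices. The point of keeping $\chi$ very large, rather than merely large enough to find one independent set, is that the extraction can be \emph{repeated}: after reserving material for one branch vertex or one connecting path, the remainder still has large chromatic number, so the next piece can be found disjoint from, and essentially non-adjacent to, what was already built.

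The heart of the argument is the routing of the connecting paths while preserving inducedness, and here the diameter-two property is essential. For each edge $ij$ of $F$ I want to join $b_i$ to $b_j$ by a short induced path; a common neighbor yields a length-two path immediately, but a naively chosen common neighbor may be adjacent to the wrong branch vertices or to another connector. The device I would use is to equip each branch vertex with a private high-chromatic ``pocket'' of its neighborhood and second neighborhood, the pockets pairwise separated by the repeated extraction above, and to route each connector through these private pockets; diameter two then guarantees the connecting common neighbors exist inside the prescribed pockets. Closing the \emph{cycles} of $F$ --- as opposed to merely realizing a tree, which Scott's theorem already handles --- is exactly the step that needs this controlled-common-neighbor mechanism, since a cyclic chain of links must return to its start without creating a chord.

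The main obstacle I anticipate is precisely this interference control: in a diameter-two graph every two vertices lie within distance two, so ``accidental'' edges between different connecting paths, or between a path and a non-incident branch vertex, are the generic situation and must be excluded by construction rather than by any appeal to sparsity. Overcoming it is what forces the large chromatic hypothesis: each of the roughly $|V(F)|+|E(F)|=O(l)$ structural requirements consumes a private high-chromatic reservoir, and guaranteeing that these reservoirs can be chosen mutually non-adjacent imposes about $O(l^2)$ pairwise separation constraints, each paid for by a Ramsey-type blow-up in $\chi$; I expect the nested accounting over these constraints to be what produces the exponent $\theta(l^4)$ in the bound $\chi(G)\ge \mathrm{e}^{\theta(l^4)}$. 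The final bookkeeping step is to check that this budget closes, i.e. that the chromatic surplus never runs out before all branch vertices and all connectors have been placed.
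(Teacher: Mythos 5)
Your plan has a genuine gap, and it sits exactly at the step you identify as ``the heart of the argument.'' You assert that, having equipped each branch vertex $b_i$ with a private pocket, ``diameter two then guarantees the connecting common neighbors exist inside the prescribed pockets.'' It does not. Diameter two guarantees only that $N(b_i)\cap N(b_j)$ is nonempty \emph{somewhere} in $G$; that intersection may consist of a single vertex, and that vertex may be adjacent to every other branch vertex and to every other connector (think of star-like maximal triangle-free graphs where all common neighborhoods pass through a few dominating vertices). No amount of reserving pockets in advance can steer a common neighbor into them, because the common neighbors are determined by $G$, not chosen by you. Producing, for many pairs $x_i,x_j$, a connector $y_{i,j}\in N[x_i]\cap N[x_j]$ that lies in \emph{no other} $N[x_k]$ is precisely the crux, and it is not a routing argument: the paper gets it from a theorem of Ding, Seymour and Winkler on hypergraphs with packing number one (the closed neighborhoods pairwise intersect by diameter two) and large transversality (large because $\chi(G)\leq 2t$, as $G$ is covered by $t$ induced stars). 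Your proposal contains no substitute for this theorem, and the ``Ramsey-type blow-up'' accounting that is supposed to buy $O(l^2)$ pairwise separation constraints is asserted, not argued; in particular the claim that repeated extraction leaves pieces ``essentially non-adjacent to what was already built'' while still attachable to the branch vertices is self-contradictory --- removing $N[b_i]$ to kill accidental adjacencies also removes every legal attachment point for the connectors at $b_i$. A further structural flaw: diameter two is a global property of $G$ and is not inherited by induced subgraphs, so your very first move --- passing to a vertex-critical subgraph $H$ --- may already destroy the property your whole routing scheme relies on, and likewise common neighbors of two pocket vertices need not lie in the pocket, or even in what remains after your deletions.

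It is also worth noting that the actual proof avoids edge-by-edge routing and interference control entirely, by a two-layer subdivision trick. From Ding--Seymour--Winkler one gets vertices $x_1,\dots,x_d$ with pairwise private connectors $y_{i,j}$; two applications of Kim's theorem (triangle-free graphs on $n$ vertices have independent sets of size $\theta(\sqrt{n\log n})$) make $X$ independent and extract an independent $Y'\subseteq Y$ of size $\theta(d\sqrt{\log d})$. Then the auxiliary graph $G'$ on $X$, with $x_ix_j$ an edge iff $y_{i,j}\in Y'$, has the property that \emph{every} subgraph of $G'$ appears in $G$ as an induced $1$-subdivision --- inducedness is automatic from independence and privacy, with nothing to check pair by pair. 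Since $G'$ has average degree $\theta(\sqrt{\log d})\geq 512 l^2$, the Bollob\'as--Thomason strengthening of Mader's theorem yields a subdivision of $K_l\supseteq F$ as a subgraph of $G'$, which lifts to an induced subdivision of $F$ in $G$; this is also where the exponent $l^4$ genuinely comes from ($\sqrt{\log d}\geq 256 l^2$), not from a budget of pairwise separation constraints. If you want to repair your approach, the missing ingredient you must either cite or reprove is the Ding--Seymour--Winkler extraction of pairwise-private intersections; without it the construction cannot start.
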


\begin{proof}
Let $H$ be the \emph{neighborhood hypergraph} of $G$, i.e. the hypergraph with vertex set $V$ 
and with hyperedges the closed neighborhoods of the vertices of $G$.
Observe that $H$ has packing number one, i.e. its hyperedges pairwise 
intersect. Note also that if the transversality of $H$ is $t$ (minimum size 
of a set of vertices intersecting all hyperedges), then $\chi(G)\leq 2t$. Indeed, $G$
can be covered by $t$ closed neighborhoods, hence by $t$ induced stars since 
$G$ is triangle-free. Since $\chi(G) \geq {\mathrm e}^{\theta(l^4)}$, the transversality of $H$ is at least 
${\mathrm e}^{\theta(l^4)}$.

Ding, Seymour and Winkler \cite{DingSW94} proved that if a hypergraph $H$ has packing number one and transversality 
greater than $11 d^2 (d+4) (d+1)^2$, it contains $d$ hyperedges $e_1,\dots ,e_d$ and a set of vertices 
$Y=\{y_{i,j}~:~1\leq i<j\leq d\}$ such that $y_{i,j}\in e_i\cap e_j$ and $y_{i,j}\notin e_k$ for all 
$k\neq i,j$. Since the transversality of $H$ is at least 
${\mathrm e}^{\theta(l^4)}$, we have such a collection of hyperedges $e_1,\dots ,e_d$ 
with $d\geq {\mathrm e}^{\theta(l^4)}$.

Each $e_i$ corresponds to the closed neighborhood of some vertex $x_i$ of $G$. Let $X=\{x_1,\dots ,x_d\}$. 
By a theorem of Kim \cite{Kim95}, every triangle-free graph on $n$ vertices has a stable 
set of size $\theta(\sqrt{n\log(n)})$. Hence there exists a stable set $S$ in $X$ of size at 
least $\sqrt{d}$ (which is still at least ${\mathrm e}^{\theta(l^4)}$). Free to restrict $X$ 
to $S$, we can assume that $X$ is indeed a stable set, still denoting it by $\{x_1,\dots ,x_d\}$. Note that 
no $y_{i,j}$ belongs to $X$ since $y_{i,j}$ would be a neighbor of some vertex of $X$, 
which is a stable set. Consequently, $X$ is a stable set of 
$G$, the set $Y$ is disjoint from $X$, and for every pair 
$x_i,x_j$ there is a unique vertex $y_{i,j}$ of $Y$ which is joined to 
exactly these two vertices of $X$.

Since the restriction of $G$ to $Y$ is triangle-free and has size $d\choose 2$, by Kim's theorem, 
it contains a stable set $Y'$ of size $\theta(d \sqrt{\log(d)})$. 

Consider the graph $G'$ on vertex set $X$ with an edge $x_ix_j$ if and only if $y_{i,j} \in Y'$. 
Note that if $G''$ is a subgraph of $G'$ on vertex set $X'$, then $G''$ appears 
as an induced subdivision in $G$. Indeed, the induced restriction of $G$ to $X'\cup Y''$,
where $y_{i,j} \in Y''$ whenever $x_ix_j$ is an edge of $G''$, is such a 
subdivision. So we just have to show that $G''$ contains a subdivision of our 
original graph $F$ as a subgraph, which is granted by the last result.

A theorem due to Mader \cite{Mader67} and improved by Bollob\'as and Thomason \cite{BollobasT98} 
ensures that each graph with average degree $512 l^2$ contains a subdivision of 
$K_l$, hence of $F$. Since $d \geq {\mathrm e}^{\theta(l^4)}$, we have $\sqrt{\log(d)} \geq 256 l^2$, 
hence $G'$ contains a subdivision of $F$, and therefore $G$ has an induced subdivision of $F$.
\end{proof}

Ding, Seymour and Winkler also bound the transversality of $H$ in terms of the packing number.
Hence the chromatic number of $G$ is also bounded when the maximum packing of neighborhoods is bounded.
This is the case for instance if the minimum degree is $c.n$ for some fixed constant $c>0$.

\bibliographystyle{plain}
\bibliography{bibli}

\end{document}